\begin{document}

\newenvironment{proof}[1][Proof]{\textbf{#1.} }{\ \rule{0.5em}{0.5em}}

\newtheorem{theorem}{Theorem}[section]
\newtheorem{definition}[theorem]{Definition}
\newtheorem{lemma}[theorem]{Lemma}
\newtheorem{remark}[theorem]{Remark}
\newtheorem{proposition}[theorem]{Proposition}
\newtheorem{corollary}[theorem]{Corollary}
\newtheorem{example}[theorem]{Example}

\numberwithin{equation}{section}
\newcommand{\ep}{\varepsilon}
\newcommand{\R}{{\mathbb  R}}
\newcommand\C{{\mathbb  C}}
\newcommand\Q{{\mathbb Q}}
\newcommand\Z{{\mathbb Z}}
\newcommand{\N}{{\mathbb N}}

\newcommand{\bfi}{\bfseries\itshape}

\newsavebox{\savepar}
\newenvironment{boxit}{\begin{lrbox}{\savepar}
\begin{minipage}[b]{15.5cm}}{\end{minipage}\end{lrbox}
\fbox{\usebox{\savepar}}}

\title{{\bf Controlling the stability of periodic orbits of completely integrable systems}}
\author{R\u{a}zvan M. Tudoran}

\date{}
\maketitle \makeatother

\begin{abstract}
We provide a constructive method designed in order to control the stability of a given periodic orbit of a general completely integrable system. The method consists of a specific type of perturbation, such that the resulting perturbed system becomes a codimension-one dissipative dynamical system which also admits that orbit as a periodic orbit, but whose stability can be a-priori prescribed. The main results are illustrated in the case of a three dimensional dissipative perturbation of the harmonic oscillator, and respectively Euler's  equations form the free rigid body dynamics.
\end{abstract}

\medskip

\textbf{AMS 2000}: 37C27; 37C75; 34C25; 37J35.

\textbf{Keywords}: dissipative dynamics; periodic orbits; characteristic multipliers; stability.

\section{Introduction}
\label{section:one}

The main purpose of this work is to provide a constructive method of controlling the stability of periodic orbits of completely integrable systems. The method consists of a specific type of perturbation, such that the resulting perturbed system becomes a codimension-one dissipative dynamical system. 

The controllability procedure is based on an explicit formula for the characteristic multipliers of a given periodic orbit of a general codimension-one dissipative dynamical system. Even if this explicit formula is not the main purpose of the paper, it can be considered as the core of the article, since all main results are actually based on it. Recall that the explicit knowledge of the characteristic multipliers of a periodic orbit, is extremely useful for the study of its stability (for details regarding the stability of periodic orbits see e.g., \cite{hartman}, \cite{verhulst}). 

Because of the local nature of the main results, one can suppose that we work on an open subset $U\subseteq \mathbb{R}^n$. More precisely, let $\dot x =X(x)$, $X\in\mathfrak{X}(U)$, be a given codimension-one dissipative dynamical system, i.e., there exists $k,p\in\mathbb{N}$ such that $k+p=n-1$, and some smooth functions $I_1,\dots,I_k, D_1,\dots, D_p$, $h_1$, $\dots$, $h_p \in\mathcal{C}^{\infty}(U,\mathbb{R})$ such that the vector field $X$ conserves $I_1,\dots,I_k$, and dissipates $D_1,\dots, D_p$ with associated dissipation rates $h_1 D_1, \dots, h_p D_p$. Suppose that $\Gamma:=\{\gamma(t)\subset U : 0\leq t\leq T \}$ is a $T-$periodic orbit of $\dot x =X(x)$ such that $\Gamma\subset ID^{-1}(\{0\})$, and moreover, $0 \in \mathbb{R}^{n-1}$ is a regular value of the map $ID:=(I_1,\dots,I_k,D_1,\dots,D_p):U\subseteq \mathbb{R}^{n}\rightarrow \mathbb{R}^{n-1}$. 

In the above hypothesis, the first result states that, if $$\nabla I_1(\gamma(t)),\dots,\nabla I_k(\gamma(t)),\nabla D_1(\gamma(t)),\dots, \nabla D_p (\gamma(t)), X(\gamma(t))$$ are linearly independent for each $0\leq t \leq T$, then the characteristic multipliers of the periodic orbit $\Gamma$ are, $\underbrace{1,\dots,1}_{k+1 \ \mathrm{times}}, \exp\left(\int_{0}^{T}h_1 (\gamma(s))ds\right),\dots, \exp\left(\int_{0}^{T}h_p (\gamma(s))ds\right)$. 

The second result of this work is a consequence of the explicit computation of the characteristic multipliers of a given periodic orbit, and consists of two stability results. More precisely, if there exists $i_0 \in\{1,\dots, p\}$ such that $\int_{0}^{T}h_{i_0} (\gamma(s))ds>0$, then the periodic orbit $\Gamma$ is unstable. 

On the other hand (supposing that $0\in \mathbb{R}^{k}$ is a regular value of the map $I:=(I_1,\dots,I_k)$), if $\int_{0}^{T}h_1 (\gamma(s))ds<0$, $\dots$, $\int_{0}^{T}h_p (\gamma(s))ds<0$, then the periodic orbit $\Gamma$ is orbitally phase asymptotically stable, with respect to perturbations along the invariant manifold $I^{-1}(\{0\})$.

The third result of this article, which is also the main result, provides a method to partially stabilize a given periodic orbit of a completely integrable dynamical system. More precisely, to a given completely integrable system, and respectively a given periodic orbit, we explicitly associate a dissipative dynamical system admitting the same periodic orbit, and moreover, this periodic orbit is orbitally phase asymptotically stable, relative to a certain dynamically invariant set. Note that dissipative perturbations were also used in stabilization procedures of equilibrium states of Hamiltonian systems, see e.g., \cite{rtud}, \cite{bc}.

The structure of the paper is the following. In the second section, one recalls a characterization of codimension-one dissipative dynamical systems, that will be used in the next sections. The third section is dedicated to the explicit computation of the characteristic multipliers of a given periodic orbit of a general codimension-one dissipative dynamical system. The fourth section uses the results from the previous section, in order to provide sufficient conditions to guarantee the partial stability, and respectively the instability of periodic orbits of codimension-one dissipative dynamical systems. The last section contains an explicit method to stabilize (relatively to a certain dynamically invariant set) a given periodic orbit of a completely integrable dynamical system.

\section{Codimension-one dissipative dynamical systems}

In this short section we recall some results concerning the codimension-one dissipative dynamical systems. For more details regarding the characterization of general dissipative dynamical systems see e.g., \cite{rtudoran}. 

Recall that by a codimension-one dissipative dynamical system (defined eventually on an open subset $U\subseteq \mathbb{R}^{n}$), we mean a dynamical system $\dot x =X(x)$, $X\in\mathfrak{X}(U)$, for which there exist $k,p\in\mathbb{N}$, with $k+p=n-1$, and some smooth functions $I_1,\dots,I_k, D_1,\dots, D_p$, $h_1$, $\dots$, $h_p \in\mathcal{C}^{\infty}(U,\mathbb{R})$, such that the vector field $X$ conserves $I_1,\dots,I_k$, and dissipates $D_1,\dots, D_p$, with associated dissipation rates $h_1 D_1, \dots, h_p D_p$, i.e., $\mathcal{L}_{X}I_1 =\dots= \mathcal{L}_{X}I_k =0$, and respectively $\mathcal{L}_{X}D_1 =h_1 D_1$, $\dots$, $\mathcal{L}_{X}D_p =h_p D_p$, where the notation $\mathcal{L}_{X}$ stands for the Lie derivative along the vector field $X$. Note that $\mathcal{L}_{X} f =\langle X,\nabla f \rangle$, where $f\in\mathcal{C}^{\infty}(U,\mathbb{R})$ is an arbitrary smooth function, and $\nabla$ states for the gradient operator associated to the standard inner product on $\mathbb{R}^n$, namely $\langle\cdot,\cdot\rangle$.

Let us recall now a result from \cite{rtudoran} regarding the local structure of a codimension-one dissipative dynamical system.

\begin{theorem}[\cite{rtudoran}]\label{DSC}
Let $k,p\in \mathbb{N}$ be two natural numbers such that $k+p = n-1$, and let $I_1,\dots, I_k, D_1,\dots, D_p, h_1, \dots, h_p \in \mathcal{C}^{\infty}(U,\mathbb{R})$ be a given set of smooth functions defined on an open subset $U\subseteq \mathbb{R}^{n}$, such that $\{\nabla I_1 ,\dots,\nabla I_k , \nabla D_1 , \dots,\nabla D_p\}\subset \mathfrak{X}(U)$ forms a set of pointwise linearly independent vector fields on $U$.

Then the smooth vector fields $X\in \mathfrak{X}(U)$ which verify simultaneously the conditions
\begin{equation}\label{edr}
\left\{\begin{array}{l}
\mathcal{L}_{X}I_1= \dots = \mathcal{L}_{X}I_k = 0,\\
\mathcal{L}_{X}D_1 = h_1 D_1, \dots, \mathcal{L}_{X}D_p = h_p D_p,\\
\end{array}\right.
\end{equation}
are characterized as follows
$$
X = X_0 + \nu \left[\star\left(\bigwedge_{j=1}^{p} \nabla D_j\wedge\bigwedge_{l=1}^{k} \nabla I_l \right)\right],
$$
where $\nu\in\mathcal{C}^{\infty}(U,\mathbb{R})$ is an arbitrary rescaling function,
$$
X_{0}=\left\| \bigwedge_{i=1}^{p} \nabla D_i\wedge\bigwedge_{j=1}^{k} \nabla I_j \right\|_{n-1}^{-2}\cdot\sum_{i=1}^{p}(-1)^{n-i}h_i D_i \Theta_i, 
$$
$$
\Theta_i = \star\left[ \bigwedge_{j=1, j\neq i}^{p} \nabla D_j \wedge \bigwedge_{l=1}^{k} \nabla I_l  \wedge\star\left(\bigwedge_{j=1}^{p} \nabla D_j\wedge\bigwedge_{l=1}^{k} \nabla I_l \right)\right],
$$
and "$\star$" stands for the Hodge star operator for multivector fields.
\end{theorem}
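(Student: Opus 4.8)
The plan is to reduce the statement to pointwise linear algebra and then to verify the two explicit vectors by a Hodge-star computation. Since $\mathcal{L}_X f = \langle X, \nabla f\rangle$ for every smooth $f$, the system \eqref{edr} is equivalent, at each point of $U$, to the $n-1$ scalar conditions $\langle X, \nabla I_l\rangle = 0$ ($l = 1,\dots,k$) and $\langle X, \nabla D_i\rangle = h_i D_i$ ($i = 1,\dots,p$). As the $n-1$ gradients $\nabla I_1,\dots,\nabla I_k,\nabla D_1,\dots,\nabla D_p$ are pointwise linearly independent in $\mathbb{R}^n$, these prescribe $n-1$ independent linear functionals of $X$, so the solution set at each point is a one-dimensional affine subspace. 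It then suffices to exhibit the one-dimensional space of homogeneous solutions and one particular solution, the general solution being their sum; smoothness of both will be automatic from smoothness of the data and the nonvanishing of $\|\omega\|_{n-1}$, where $\omega$ is the $(n-1)$-vector defined below, itself guaranteed by the independence hypothesis.

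Throughout I write $\omega := \bigwedge_{j=1}^{p}\nabla D_j \wedge \bigwedge_{l=1}^{k}\nabla I_l$, $N := \star\,\omega$, and $\mathrm{vol}$ for the unit $n$-vector, and I use the defining Hodge identity $\langle \alpha,\beta\rangle\,\mathrm{vol} = \alpha\wedge\star\beta$ together with $\star\star = (-1)^{n-1}$ on $(n-1)$-vectors. For the homogeneous part, the solutions are exactly the vectors orthogonal to all $n-1$ gradients, a one-dimensional space. From the identity one gets $\langle N, v\rangle\,\mathrm{vol} = (-1)^{n-1}\, v\wedge\omega$; taking $v$ to be any $\nabla D_j$ or $\nabla I_l$ makes $v\wedge\omega = 0$, because $\omega$ already contains $v$ as a factor, so $\langle N, v\rangle = 0$. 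Since $\omega \neq 0$ we have $N \neq 0$, so $N$ spans the homogeneous space, which produces the term $\nu\, N$ with $\nu$ arbitrary.

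The particular solution is built from $\Theta_i = \star(\omega_i \wedge N)$, where $\omega_i$ denotes $\omega$ with the factor $\nabla D_i$ deleted. By the same computation, $\langle \Theta_i, v\rangle\,\mathrm{vol} = (-1)^{n-1}\, v\wedge\omega_i\wedge N$, which vanishes for $v = \nabla D_j$ ($j\neq i$), for $v = \nabla I_l$, and for $v = N$, since in each case the corresponding factor already appears in $\omega_i\wedge N$. Hence the only surviving pairing is with $\nabla D_i$. Using $\nabla D_i\wedge\omega_i = (-1)^{i-1}\omega$ and then $\omega\wedge N = \omega\wedge\star\omega = \|\omega\|_{n-1}^2\,\mathrm{vol}$, I obtain $\langle \Theta_i, \nabla D_i\rangle = (-1)^{n-1}(-1)^{i-1}\|\omega\|_{n-1}^2 = (-1)^{n-i}\|\omega\|_{n-1}^2$. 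Consequently $X_0 = \|\omega\|_{n-1}^{-2}\sum_{i=1}^{p}(-1)^{n-i}h_i D_i\,\Theta_i$ satisfies $\langle X_0,\nabla I_l\rangle = 0$ and $\langle X_0,\nabla D_i\rangle = h_i D_i$, i.e.\ it is a particular solution.

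Adding the two parts yields the claimed characterization $X = X_0 + \nu N$. The main obstacle is not conceptual but bookkeeping: one must track with care the signs coming from reordering factors inside the wedge products and from each application of $\star$, and check that the resulting parity $(-1)^{n-1}(-1)^{i-1} = (-1)^{n-i}$ is exactly the sign appearing in the coefficient of $\Theta_i$ in $X_0$. Everything else is routine.
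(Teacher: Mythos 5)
The paper does not prove this theorem: it is imported verbatim from \cite{rtudoran} and stated without argument, so there is no internal proof to compare against. Judged on its own, your proof is correct and complete. The pointwise reduction to an affine system of $n-1$ independent linear functionals, the identification of the homogeneous line with $\mathbb{R}\cdot\star\omega$ via $\langle v,\star\omega\rangle\,\mathrm{vol}=(-1)^{n-1}v\wedge\omega$, and the verification that $\langle\Theta_i,\nabla D_j\rangle=\delta_{ij}(-1)^{n-i}\|\omega\|_{n-1}^2$ while $\langle\Theta_i,\nabla I_l\rangle=0$ are all sound; in particular the sign bookkeeping $\nabla D_i\wedge\omega_i=(-1)^{i-1}\omega$ and $(-1)^{n-1}(-1)^{i-1}=(-1)^{n-i}$ checks out, and this is exactly the kind of Hodge-algebra verification one would expect the cited source to contain. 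The only point you leave implicit is why the function $\nu$ in the decomposition of an arbitrary smooth solution $X$ is itself smooth; this is immediate from $\nu=\langle X-X_0,\star\omega\rangle/\|\star\omega\|^2$ with $\star\omega$ smooth and nowhere vanishing, and is worth one sentence. Your observation that $\langle\Theta_i,\star\omega\rangle=0$ is not needed for the verification but does no harm.
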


\bigskip
\begin{remark}[\cite{rtudoran}]\label{ret}
The vector field $X_0$ is itself a solution of the system \eqref{edr}, while the vector field $\star\left(\bigwedge_{j=1}^{p} \nabla D_j\wedge\bigwedge_{l=1}^{k} \nabla I_l \right)$ is a solution of the homogeneous system
\begin{equation*}
\left\{\begin{array}{l}
\mathcal{L}_{X}I_1= \dots = \mathcal{L}_{X}I_k = 0,\\
\mathcal{L}_{X}D_1 = \dots =\mathcal{L}_{X}D_p = 0.\\
\end{array}\right.
\end{equation*}

\end{remark}

\bigskip
Let us now recall from \cite{rtudoran} a consequence of the above theorem, which gives a local characterization of completely integrable systems.

\begin{remark}[\cite{rtudoran}]\label{CIS}
In the case when $p=0$ (and consequently $k=n-1$), the dynamical system generated by the vector field $X$ will be completely integrable, and the conclusion of Theorem \eqref{DSC} becomes:

The smooth vector fields $X\in \mathfrak{X}(U)$ which verify the conditions
\begin{equation*}
\mathcal{L}_{X}I_1= \dots = \mathcal{L}_{X}I_{n-1} = 0,\\
\end{equation*}
are given by
$$
X = \nu \left[\star\left(\nabla I_1 \wedge\dots\wedge\nabla I_{n-1} \right)\right],
$$
where $\nu\in\mathcal{C}^{\infty}(U,\mathbb{R})$ is a smooth arbitrary function.
\end{remark}

\section{The characteristic multipliers of periodic orbits of codimension-one dissipative dynamical systems}

In this section we compute explicitly the characteristic multipliers of periodic orbits of codimension-one dissipative dynamical systems. Let us recall first that for a general dynamical system $\dot x=X(x)$, generated by a smooth vector field $X\in \mathfrak{X}(U)$, defined on an open subset $U\subseteq \mathbb{R}^{n}$, and respectively a given $T-$periodic orbit $\Gamma:=\{\gamma(t)\subset U : 0\leq t\leq T \}$, the characteristic multipliers of $\Gamma$ are the eigenvalues of the fundamental matrix $u(T)$, where $u$ is the solution of the variational equation $$\dfrac{du}{dt}=DX(\gamma(t))u(t), \ u(0)=I_{n,n},$$ and $I_{n,n}$ stands for the identity matrix of dimensions $n\times n$. Recall that since $\Gamma$ is a periodic orbit, $1$ will be always a characteristic multiplier of $\Gamma$, (see e.g., \cite{moser}). Taking into account the complexity of the variational equation, the computation of characteristic multipliers in general is almost impossible, since there exist no general methods to solve explicitly the variational equation. 

One of the main results of this paper is to complete this task for the class of codimension-one dissipative dynamical systems, if one knows an explicit parameterization of the periodic orbit to be analyzed. In order to do that we will use the following result from \cite{gasul}.

\begin{theorem}[\cite{gasul}]\label{TR}
Let $\Gamma=\{\gamma(t)\subset U : 0\leq t\leq T \}$ be a $T-$periodic orbit of a dynamical system $\dot x=X(x)$. Consider a smooth function $f:U\subseteq \mathbb{R}^{n}\rightarrow \mathbb{R}^{n-1}$, $f=(f_1,\dots,f_{n-1})^{\top}$, such that:
\begin{itemize}
\item $\Gamma$ is contained in $\bigcap_{i=1}^{n-1}\{f_i (x)=0\}$,
\item the crossing of all the manifolds $\{f_i (x)=0\}$ for $i\in\{1,\dots,n-1\}$ are transversal over $\Gamma$,
\item there exists a $(n-1)\times (n-1)$ matrix $k(x)$ of real functions satisfying:
\begin{equation}\label{ED}
Df(x)X(x)=k(x)f(x).
\end{equation}
\end{itemize}
Let $v(t)$ be the $(n-1)\times (n-1)$ fundamental matrix solution of 
\begin{equation}\label{EFD}
\dfrac{dv}{dt}=k(\gamma(t))v(t), \ v(0)=I_{n-1,n-1},
\end{equation}
where $I_{n-1,n-1}$ stands for the identity matrix of dimensions $(n-1)\times (n-1)$.
Then the characteristic multipliers of $\Gamma$ are $\{1\}\cup \sigma(v(T))$, where $\sigma(v(T))$ stands for the spectrum of $v(T)$.
\end{theorem}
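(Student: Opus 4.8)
The plan is to relate the full monodromy matrix $u(T)$ to the smaller fundamental matrix $v(T)$ through an intertwining identity, and then extract the spectrum by elementary linear algebra. First I would recall that the characteristic multipliers are the eigenvalues of $u(T)=D\phi_T(\gamma(0))$, where $\phi_t$ is the flow of $X$. The starting observation is that the hypothesis $Df(x)X(x)=k(x)f(x)$ holds on a neighbourhood of $\Gamma$, so the composed function $F(t,x):=f(\phi_t(x))$ obeys the evolution $\partial_t F(t,x)=k(\phi_t(x))F(t,x)$ along every nearby trajectory, since $\partial_t\phi_t(x)=X(\phi_t(x))$.

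The key step is to differentiate this relation with respect to the initial condition $x$ and evaluate at $x=\gamma(0)$. Setting $M(t):=D_xF(t,\gamma(0))=Df(\gamma(t))\,u(t)$, the product rule produces two contributions; the one carrying the $x$-derivative of $k$ is multiplied by $F(t,\gamma(0))=f(\gamma(t))=0$ and therefore drops out, precisely because $\Gamma$ lies in the common zero set of the $f_i$. What survives is the matrix equation $\dot M(t)=k(\gamma(t))M(t)$ with $M(0)=Df(\gamma(0))$. Uniqueness for linear systems then gives $M(t)=v(t)\,Df(\gamma(0))$, and evaluating at $t=T$ with $\gamma(T)=\gamma(0)$ yields the intertwining identity $A\,u(T)=v(T)\,A$, where $A:=Df(\gamma(0))$ is an $(n-1)\times n$ matrix.

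With this identity the rest is linear algebra. By the transversality hypothesis $A$ has full rank $n-1$, so $\ker A$ is one-dimensional; since $Df(\gamma(0))X(\gamma(0))=k(\gamma(0))f(\gamma(0))=0$ and $X(\gamma(0))\neq 0$, this kernel is exactly $\mathrm{span}\{X(\gamma(0))\}$. The identity $A\,u(T)=v(T)\,A$ shows that $u(T)$ leaves $\ker A$ invariant, where it acts as the identity by the standard relation $u(T)X(\gamma(0))=X(\gamma(T))=X(\gamma(0))$, while the isomorphism $\mathbb{R}^n/\ker A\to\mathbb{R}^{n-1}$ induced by $A$ conjugates the map induced by $u(T)$ on the quotient to $v(T)$. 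Factoring the characteristic polynomial of $u(T)$ along the invariant subspace $\ker A$ then gives $\sigma(u(T))=\{1\}\cup\sigma(v(T))$ as multisets, which is the assertion.

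I expect the main obstacle to be the differentiation step: one must invoke smooth dependence of the flow on initial data to interchange $\partial_t$ and $D_x$, and, more importantly, recognise that the $Dk$-term vanishes only after restriction to $x=\gamma(0)$, so that the reduction to the matrix equation for $M$ is genuinely an on-orbit phenomenon rather than an identity valid off $\Gamma$. Once the intertwining relation and the identification of $\ker A$ with $\mathrm{span}\{X(\gamma(0))\}$ are in place, the spectral conclusion is routine.
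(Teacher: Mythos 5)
The paper does not prove this statement: it is quoted verbatim from the reference \cite{gasul} and used as a black box, so there is no internal proof to compare against. Your argument is nonetheless a correct and complete proof, and it follows what is essentially the standard route of the cited source: the intertwining identity $Df(\gamma(t))\,u(t)=v(t)\,Df(\gamma(0))$, obtained by differentiating $f\circ\phi_t$ in the initial condition and using $f\circ\gamma\equiv 0$ to kill the term involving the derivative of $k$, followed by the linear-algebra step identifying $\ker Df(\gamma(0))$ with $\mathrm{span}\{X(\gamma(0))\}$ (transversality gives rank $n-1$, and $Df\,X=kf=0$ on $\Gamma$) and reading off $\sigma(u(T))=\{1\}\cup\sigma(v(T))$ from the block-triangular structure. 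Two small points deserve explicit mention. First, your product-rule step requires $k$ to be differentiable in $x$, which the bare statement (``a matrix of real functions'') does not guarantee; this is harmless here because in every application in the paper $k$ is assembled from the smooth functions $h_1,\dots,h_p$, but it is a hypothesis you are silently adding. Second, the conclusion should be read as an equality of multisets (eigenvalues counted with algebraic multiplicity); your factorization of the characteristic polynomial along the invariant line $\ker A$ does deliver exactly that, and this multiplicity bookkeeping is what Theorem \ref{MT} later relies on when it counts $k+1$ multipliers equal to one.
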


Using the above result, we will compute explicitly the characteristic multipliers of a given periodic orbit of a codimension-one dissipative dynamical system. Let us state now the main result of this section.

\begin{theorem}\label{MT}
Let $\dot x= X(x)$ be a codimension-one dissipative dynamical system generated by a smooth vector field $X\in\mathfrak{X}(U)$ defined eventually on an open subset $U\subseteq \mathbb{R}^{n}$, such that there exist $k,p\in\mathbb{N}$, $k+p=n-1$, and respectively $I_1,\dots,I_k, D_1,\dots, D_p$, $h_1$, $\dots$, $h_p \in\mathcal{C}^{\infty}(U,\mathbb{R})$ such that $\mathcal{L}_{X}I_1 =\dots= \mathcal{L}_{X}I_k =0$, and $\mathcal{L}_{X}D_1 =h_1 D_1$, $\dots$, $\mathcal{L}_{X}D_p =h_p D_p$. Suppose that $\Gamma=\{\gamma(t)\subset U : 0\leq t\leq T \}$ is a $T-$periodic orbit of $\dot x= X(x)$, such that the following conditions hold true:
\begin{itemize}
\item $\Gamma\subset ID^{-1}(\{0\})$, and $0 \in \mathbb{R}^{n-1}$ is a regular value of the map $$ID=(I_1,\dots,I_k,D_1,\dots,D_p):U\subseteq \mathbb{R}^{n}\rightarrow \mathbb{R}^{n-1},$$
\item $\nabla I_1(\gamma(t)),\dots,\nabla I_k(\gamma(t)),\nabla D_1(\gamma(t)),\dots, \nabla D_p (\gamma(t)), X(\gamma(t))$ are linearly \\
independent for each $0\leq t \leq T$.
\end{itemize}
Then, the characteristic multipliers of the periodic orbit $\Gamma$ are $$\underbrace{1,\dots,1}_{k+1 \ \mathrm{times}}, \exp\left(\int_{0}^{T}h_1 (\gamma(s))ds\right), \dots, \exp\left(\int_{0}^{T}h_p (\gamma(s))ds\right).$$
\end{theorem}
\begin{proof}
The first two conditions from the hypothesis of Theorem \eqref{TR} obviously imply the first two conditions of this theorem. Let us construct now a matrix $(n-1)\times (n-1)$, $k(x)$, which verifies the equivalent of equation \eqref{ED}, namely the equation
\begin{equation}\label{EDD}
D(ID)^{\top}(x)X(x)=k(x)(ID)^{\top}(x),
\end{equation}
where by $(ID)^{\top}$ we mean the transpose of the matrix $ID=(I_1,\dots,I_k,D_1,\dots,D_p)$.

If we define
$
k=\left[ {\begin{array}{*{20}c}
   O_{k,k} & O_{k,p}  \\
   O_{p,k} & H_{p,p}  \\
\end{array}} \right],
$
where $O_{k,k},O_{k,p},O_{p,k}$ stands for the null matrices of dimensions $k \times k, k \times p, p \times k,$ and $H_{p,p}=\operatorname{diag}[h_1,\dots,h_p ]$, then the equation \eqref{EDD} can be equivalently written as follows
\begin{equation}\label{split}
\left\{\begin{array}{l}
\langle\nabla I_1,X\rangle=\dots=\langle\nabla I_k,X\rangle=0,\\
\langle\nabla D_1,X\rangle=h_1 D_1,\dots, \langle\nabla D_p,X\rangle=h_p D_p.\\
\end{array}\right.
\end{equation}

Hence, the matrix $k$ is a solution of equation \eqref{EDD}, since the system \eqref{split} is obviously equivalent to 
\begin{equation*}
\left\{\begin{array}{l}
\mathcal{L}_{X}I_1= \dots = \mathcal{L}_{X}I_k = 0,\\
\mathcal{L}_{X}D_1 = h_1 D_1, \dots, \mathcal{L}_{X}D_p = h_p D_p,\\
\end{array}\right.
\end{equation*}
which is by hypothesis verified by the vector field $X$.

Let us solve now the equation \eqref{EFD} associated to the matrix $k$. In order to solve the equation, let us split first the $(n-1)\times (n-1)$ matrix $v$ as follows
$
v=\left[ {\begin{array}{*{20}c}
   v_{k,k} & v_{k,p}  \\
   v_{p,k} & v_{p,p}  \\
\end{array}} \right],
$
where the blocks $v_{k,k},v_{k,p},v_{p,k},v_{p,p}$ have dimension $k\times k$, $k\times p$, $p\times k$, and respectively $p\times p$. Using the same type of splitting, the initial condition matrix $v(0)=I_{n-1,n-1}$ splits as follows
$$
v(0)=\left[ {\begin{array}{*{20}c}
   v_{k,k}(0) & v_{k,p}(0)  \\
   v_{p,k}(0) & v_{p,p}(0)  \\
\end{array}} \right]
=\left[ {\begin{array}{*{20}c}
   I_{k,k }& O_{k,p}  \\
   O_{p,k} & I_{p,p}  \\
\end{array}} \right],
$$
where $I_{k,k}, I_{p,p}$ stands for the identity matrices of dimensions $k\times k$, and respectively $p\times p$.
Consequently, the equation \eqref{EFD} becomes

\begin{equation*}
\left\{\begin{array}{l}
\left[ {\begin{array}{*{20}c}
   \dot{v}_{k,k}(t) & \dot{v}_{k,p}(t)  \\
   \dot{v}_{p,k}(t) & \dot{v}_{p,p}(t)  \\
\end{array}} \right]
=\left[ {\begin{array}{*{20}c}
   O_{k,k} & O_{k,p}  \\
   O_{p,k} & H_{p,p}(\gamma(t)) \\
\end{array}} \right]\cdot\left[ {\begin{array}{*{20}c}
   v_{k,k}(t) & v_{k,p}(t)  \\
   v_{p,k}(t) & v_{p,p}(t)  \\
\end{array}} \right], \\
\left[ {\begin{array}{*{20}c}
   v_{k,k}(0) & v_{k,p}(0)  \\
   v_{p,k}(0) & v_{p,p}(0)  \\
\end{array}} \right]=\left[ {\begin{array}{*{20}c}
   I_{k,k} & O_{k,p}  \\
   O_{p,k} & I_{p,p}  \\
\end{array}} \right],
\end{array}\right.
\end{equation*}
or equivalently
\begin{equation*}
\left\{\begin{array}{l}
\dot{v}_{k,k}(t)=O_{k,k},\\
\dot{v}_{k,p}(t)=O_{k,p},\\
\dot{v}_{p,k}(t)=H_{p,p}(\gamma(t)) v_{p,k}(t),\\
\dot{v}_{p,p}(t)=H_{p,p}(\gamma(t)) v_{p,p}(t),\\
v_{k,k}(0)=I_{k,k},\\
v_{k,p}(0)=O_{k,p},\\
v_{p,k}(0)=O_{p,k},\\
v_{p,p}(0)=I_{p,p},
\end{array}\right.
\end{equation*}
where $H_{p,p}(\gamma(t))=\operatorname{diag}\left[h_1(\gamma(t)),\dots,h_p(\gamma(t))\right]$, and $t\in[0,T]$.

Using standard ODE techniques, one obtains the unique solution $$v:[0,T]\rightarrow GL(n-1,\mathbb{\R}),$$ 
$$
v(t)=\left[ {\begin{array}{*{20}c}
   I_{k,k} & O_{k,p}  \\
   O_{p,k} & v_{p,p}(t)  \\
\end{array}} \right],
$$
where $$v_{p,p}(t)=\operatorname{diag}\left[\exp\left(\int_{0}^{t}h_1(\gamma(s))ds\right),\dots,\exp\left(\int_{0}^{t}h_p(\gamma(s))ds\right)\right].$$

Since from Theorem \eqref{TR}, the characteristic multipliers of the periodic orbit $\Gamma$ are given by $\{1\}\cup \sigma(v(T))$, we obtain the conclusion. 
\end{proof}

\bigskip
Note that if $p=0$, we recover a classical result concerning the characteristic multipliers of periodic orbits of completely integrable systems. More precisely, if $p=0$, then the dynamical system $\dot x=X(x)$ from Theorem \eqref{MT} becomes completely integrable, and using the conclusion of Theorem \eqref{MT} we get that the characteristic multipliers of any periodic orbit of a completely integrable system, are all equal to one.

\section{Some stability results regarding the periodic orbits of codimension-one dissipative dynamical systems}

This section has two main purposes, namely, the first purpose is to provide sufficient conditions to guarantee the partial orbital asymptotic stability with asymptotic phase of periodic orbits of a codimension-one dissipative dynamical system, whereas the second purpose is to give sufficient conditions to guarantee the instability of periodic orbits of a codimension-one dissipative dynamical system.

Let us start by recalling some definitions and also some general results concerning the stability of the periodic orbits of a general dynamical system. In order to do that, let $\dot x =X(x)$ be a dynamical system generated by a smooth vector field $X\in\mathfrak{X}(U)$, defined eventually on an open subset $U\subseteq \mathbb{R}^{n}$. Suppose $\Gamma =\{\gamma(t)\subset U : 0\leq t\leq T \}$ is a $T-$periodic orbit of $\dot x =X(x)$.

\begin{definition}
\begin{itemize}
\item The periodic orbit $\Gamma$ is called \textbf{orbitally stable} if, given $\varepsilon >0$ there exists a $\delta >0$ such that $\operatorname{dist}(x(t,x_0),\Gamma)<\varepsilon $ for all $t>0$ and for all $x_0 \in U$ such that $\operatorname{dist}(x_0,\Gamma)<\delta $.
\item The periodic orbit $\Gamma$ is called \textbf{unstable} if it is not orbitally stable.
\item The periodic orbit $\Gamma$ is called \textbf{orbitally asymptotically stable} if it is orbitally stable and (by choosing $\delta$ smaller if necessary), $\operatorname{dist}(x(t,x_0),\Gamma)\rightarrow 0$ as $t\rightarrow \infty$.
\item The periodic orbit $\Gamma$ is called \textbf{orbitally phase asymptotically stable}, if it is asymptotically orbitally stable and there is a $\delta >0$ such that for each $x_0 \in U$ with $\operatorname{dist}(x_0,\Gamma)<\delta $, there exists $\theta_{0}=\theta_0 (x_0)$ such that $$\lim_{t\rightarrow \infty}\|x(t,x_0)-\gamma(t+\theta_{0})\|=0.$$
\end{itemize}
\end{definition}

Let us now recall a classical result which gives some sufficient conditions to guarantee the stability/instability of a periodic orbit in terms of its characteristic multipliers. For more details regarding these results see e.g., \cite{hartman}, \cite{verhulst}.

\begin{theorem}\label{AW}
Suppose $\Gamma =\{\gamma(t)\subset U : 0\leq t\leq T \}$ is a $T-$periodic orbit of the dynamical system $\dot x =X(x)$ generated by a smooth vector field $X\in\mathfrak{X}(U)$, defined eventually on an open subset $U\subseteq \mathbb{R}^{n}$. 
\begin{itemize}
\item If the characteristic multiplier one is simple (has multiplicity one) and the rest of the characteristic multipliers of the periodic orbit $\Gamma$ have all of them the modulus strictly less then one, then the periodic orbit $\Gamma$ is asymptotically orbitally stable with asymptotic phase.
\item If there exists at least one characteristic multiplier of the periodic orbit $\Gamma$, whose modulus is strictly greater then one, then the periodic orbit $\Gamma$ is unstable. 
\end{itemize}
\end{theorem}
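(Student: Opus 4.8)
The plan is to reduce the statement, which concerns the continuous-time flow near the closed orbit $\Gamma$, to a purely discrete-time stability question for the associated Poincar\'e first-return map, whose spectral data is directly governed by the characteristic multipliers. First I would fix the point $x_* := \gamma(0)$ and, using that $X(x_*)\neq 0$, choose a codimension-one section $\Sigma$ through $x_*$ transverse to the flow. In a neighborhood of $x_*$ the return time to $\Sigma$ is a smooth function close to $T$, so the Poincar\'e map $P:\Sigma\to\Sigma$ is a well-defined smooth map fixing $x_*$. The key linear-algebra fact I would establish is that the monodromy matrix $u(T)$ always admits $X(x_*)$ as an eigenvector with eigenvalue $1$ (differentiating $\dot\gamma = X(\gamma)$ shows that $t\mapsto \dot\gamma(t)$ solves the variational equation), and that, after splitting $\R^n$ as $\R\, X(x_*)\oplus T_{x_*}\Sigma$, the remaining $n-1$ eigenvalues of $u(T)$ coincide with the eigenvalues of $DP(x_*)$. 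Thus the nontrivial characteristic multipliers of $\Gamma$ are precisely the eigenvalues of the derivative of the Poincar\'e map at its fixed point.

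For the first (stability) assertion, the hypothesis that the multiplier $1$ is simple means that $1$ is not an eigenvalue of $DP(x_*)$, while the remaining multipliers have modulus strictly below one; hence $\rho(DP(x_*)) < 1$. By the standard discrete-time stability theorem (for a fixed point whose linearization is a contraction in a suitably adapted norm) there is a neighborhood of $x_*$ in $\Sigma$ on which the iterates $P^m$ converge to $x_*$ geometrically. Suspending this back into the flow yields that every trajectory starting sufficiently close to $\Gamma$ returns to $\Sigma$ arbitrarily near $x_*$ and is attracted to $\Gamma$, giving orbital asymptotic stability. For the asymptotic phase I would invoke the local stable-manifold description of the attracting fixed point of $P$: the geometric rate of approach to $x_*$ allows one to define, for each nearby $x_0$, a limiting phase $\theta_0(x_0)$ so that $\|x(t,x_0)-\gamma(t+\theta_0)\|\to 0$, which is exactly the asymptotic-phase conclusion.

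For the second (instability) assertion, a characteristic multiplier of modulus strictly greater than one is, by the correspondence above, an eigenvalue of $DP(x_*)$ lying outside the closed unit disk. Hence $DP(x_*)$ possesses an expanding eigendirection, and $x_*$ is an unstable fixed point of $P$: points of $\Sigma$ arbitrarily close to $x_*$ are pushed a definite distance away under iteration of $P$. Translating this back to the flow shows that trajectories starting arbitrarily near $\Gamma$ leave a fixed tubular neighborhood, so $\Gamma$ fails to be orbitally stable.

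The main obstacle, and the only genuinely delicate part, is the asymptotic-phase claim in the first item: orbital asymptotic stability follows quickly from $\rho(DP(x_*)) < 1$, but producing the synchronizing time-shift $\theta_0$ and proving $\|x(t,x_0)-\gamma(t+\theta_0)\|\to 0$ requires the finer stable-manifold and contraction analysis of the suspended map rather than the mere spectral-radius estimate. The clean identification of the nontrivial multipliers with the spectrum of $DP(x_*)$ (in particular, that algebraic simplicity of the eigenvalue $1$ of $u(T)$ removes $1$ from the spectrum of $DP(x_*)$) is the second point that must be handled carefully, since everything downstream rests on it.
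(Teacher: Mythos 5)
The paper gives no proof of this statement: Theorem \ref{AW} is recalled as a classical result and the reader is referred to \cite{hartman} and \cite{verhulst}. Your reduction to the Poincar\'e return map --- identifying the nontrivial characteristic multipliers with the spectrum of $DP(x_*)$, deducing $\rho(DP(x_*))<1$ from simplicity of the multiplier $1$ together with the modulus condition, and handling instability via an expanding eigendirection of $DP(x_*)$ --- is exactly the classical route taken in those references, and your outline is sound, including your correct identification of the asymptotic-phase claim as the step requiring the finer contraction/return-time estimates rather than the bare spectral-radius bound.
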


Let us now state the main result of this section, which is a generalization of the above result in the case when the characteristic multiplier one is not simple.
\begin{theorem}\label{SPO}
Let $\dot x= X(x)$ be a codimension-one dissipative dynamical system generated by a smooth vector field $X\in\mathfrak{X}(U)$ defined eventually on an open subset $U\subseteq \mathbb{R}^{n}$, such that there exists $k,p\in\mathbb{N}$ with $p>0$, $k+p=n-1$, and respectively $I_1,\dots,I_k, D_1,\dots, D_p$, $h_1$, $\dots$, $h_p \in\mathcal{C}^{\infty}(U,\mathbb{R})$ such that $\mathcal{L}_{X}I_1 =\dots= \mathcal{L}_{X}I_k =0$, and $\mathcal{L}_{X}D_1 =h_1 D_1$, $\dots$, $\mathcal{L}_{X}D_p =h_p D_p$. Suppose $\Gamma=\{\gamma(t)\subset U : 0\leq t\leq T \}$ is a $T-$periodic orbit of $\dot x= X(x)$, such that the following conditions hold true:
\begin{itemize}
\item $\Gamma\subset ID^{-1}(\{0\})$, and $0 \in \mathbb{R}^{n-1}$ is a regular value of the map $$ID=(I_1,\dots,I_k,D_1,\dots,D_p):U\subseteq \mathbb{R}^{n}\rightarrow \mathbb{R}^{n-1},$$
\item $\nabla I_1(\gamma(t)),\dots,\nabla I_k(\gamma(t)),\nabla D_1(\gamma(t)),\dots, \nabla D_p (\gamma(t)), X(\gamma(t))$ are linearly\\
 independent for each $0\leq t \leq T$.
\end{itemize}

Then, if moreover $0 \in \mathbb{R}^{k}$ is a regular value of the map $I=(I_1,\dots,I_k):U\subseteq \mathbb{R}^{n}\rightarrow \mathbb{R}^{k},$ and if $$\int_{0}^{T}h_1 (\gamma(s))ds<0, \dots, \int_{0}^{T}h_p (\gamma(s))ds<0,$$ then the periodic orbit $\Gamma$ is orbitally phase asymptotically stable, with respect to perturbations along the invariant manifold $I^{-1}(\{0\})$. 

On the other hand, if there exists $i_0 \in \{1,\dots,p\}$ such that $\int_{0}^{T}h_{i_0} (\gamma(s))ds>0$, then the periodic orbit $\Gamma$ is unstable.
\end{theorem}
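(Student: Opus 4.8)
The plan is to deduce both assertions from the explicit list of characteristic multipliers furnished by Theorem \ref{MT}, combined with the classical stability criterion of Theorem \ref{AW}. The instability statement is immediate: if $\int_0^T h_{i_0}(\gamma(s))\,ds>0$ for some $i_0$, then by Theorem \ref{MT} the number $\exp\left(\int_0^T h_{i_0}(\gamma(s))\,ds\right)$ is a characteristic multiplier of $\Gamma$ of modulus strictly greater than one, so the second item of Theorem \ref{AW} applies verbatim and $\Gamma$ is unstable.

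The stability statement requires more care, because Theorem \ref{MT} shows that the multiplier $1$ has multiplicity $k+1$, so the first item of Theorem \ref{AW} cannot be invoked directly on $U$ unless $k=0$. The idea is to pass to the invariant manifold $M:=I^{-1}(\{0\})$ and apply Theorem \ref{AW} to the restricted dynamics. First I would note that, since $0\in\mathbb{R}^k$ is by hypothesis a regular value of $I=(I_1,\dots,I_k)$, the set $M$ is a smooth embedded submanifold of $U$ of dimension $n-k=p+1$; moreover, as $\mathcal{L}_X I_1=\dots=\mathcal{L}_X I_k=0$, the manifold $M$ is invariant under the flow of $X$, the restriction $\widetilde X:=X|_M$ is a well-defined vector field on $M$, and $\Gamma\subset M$ is a $T$-periodic orbit of $\widetilde X$.

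Next I would check that $\widetilde X$ is again a codimension-one dissipative system on $M$, now with no conserved functions ($k'=0$) and the $p$ dissipated functions $D_i|_M$: indeed $\mathcal{L}_{\widetilde X}(D_i|_M)=(\mathcal{L}_X D_i)|_M=(h_i|_M)(D_i|_M)$, and $\dim M-1=p$. It then remains to verify the two hypotheses of Theorem \ref{MT} for the restricted system, working in a local parameterization around $\Gamma$ so as to place ourselves in the $\mathbb{R}^{p+1}$ setting in which that theorem is stated. Writing $V_t:=T_{\gamma(t)}M=\{v:\langle\nabla I_j(\gamma(t)),v\rangle=0,\ j=1,\dots,k\}$ and denoting by $\nabla_M D_i$ the orthogonal projection of $\nabla D_i$ onto $V_t$, the required facts — that $0$ is a regular value of $(D_1,\dots,D_p)|_M$ on $M\cap D^{-1}(\{0\})=(ID)^{-1}(\{0\})$, and that $\nabla_M D_1(\gamma(t)),\dots,\nabla_M D_p(\gamma(t)),\widetilde X(\gamma(t))$ are linearly independent along $\Gamma$ — follow from the corresponding hypotheses on $U$ by a short linear-algebra argument: any vanishing linear combination of the projected vectors lifts, after adding a suitable combination of the $\nabla I_j$, to a vanishing linear combination of $\nabla I_1,\dots,\nabla I_k,\nabla D_1,\dots,\nabla D_p$ (together with $X$), whose coefficients must all vanish by the assumed pointwise independence of the full family.

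With these hypotheses in hand, and since both the characteristic multipliers and the notion of orbital asymptotic stability with asymptotic phase are invariant under diffeomorphism, Theorem \ref{MT} applied to $\widetilde X$ on $M$ yields that the characteristic multipliers of $\Gamma$, computed relative to the $(p+1)$-dimensional restricted system, are exactly $1,\exp\left(\int_0^T h_1(\gamma(s))\,ds\right),\dots,\exp\left(\int_0^T h_p(\gamma(s))\,ds\right)$; the $k$ extra copies of $1$ present in the full spectrum have disappeared, as they correspond precisely to the directions transverse to $M$ that the restriction discards. Since each $\int_0^T h_i(\gamma(s))\,ds<0$, every nontrivial multiplier has modulus strictly less than one while the multiplier $1$ is now simple, so the first item of Theorem \ref{AW}, applied to $\widetilde X$, gives that $\Gamma$ is orbitally phase asymptotically stable as a periodic orbit of the restricted system, i.e. with respect to perturbations along $M=I^{-1}(\{0\})$. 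I expect the main technical point to be precisely this reduction step — justifying that restricting to $M$ both preserves the hypotheses of Theorem \ref{MT} and collapses the $(k+1)$-fold multiplier $1$ to a simple one — rather than any computation, the latter being handled entirely by re-invoking Theorem \ref{MT}.
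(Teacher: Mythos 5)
Your proposal is correct, and its overall architecture coincides with the paper's: handle instability by applying the second item of Theorem \ref{AW} directly to $X$ on $U$, and handle stability by restricting to the invariant manifold $I^{-1}(\{0\})$, showing the restricted orbit has multipliers $1,\exp\bigl(\int_0^T h_1\bigr),\dots,\exp\bigl(\int_0^T h_p\bigr)$ with $1$ now simple, and invoking the first item of Theorem \ref{AW}. The genuine difference lies in how the spectrum of the restricted system is obtained. The paper cites a classical result (attributed to Moser--Zehnder) asserting that restricting to the common level set of $k$ independent first integrals removes exactly the $k$ unit multipliers associated to those integrals, so it reads off the restricted spectrum from the full one computed in Theorem \ref{MT}. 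You instead re-run Theorem \ref{MT} on the restricted system, viewing $X|_M$ as a codimension-one dissipative system on the $(p+1)$-dimensional manifold $M$ with $k'=0$ conserved and $p$ dissipated quantities, and you verify its hypotheses by the projection/lifting argument (any vanishing combination of the projected $\nabla D_i$ and $\widetilde X$ lifts to one of the full independent family). That argument is sound, and 0 being a regular value of $D|_M$ on $(ID)^{-1}(\{0\})$ follows the same way. Your route is more self-contained (it does not import the external result on multipliers in the presence of first integrals) at the cost of the chart-reduction step: you need a tubular neighborhood of $\Gamma$ in $M$ diffeomorphic to an open subset of $\mathbb{R}^{p+1}$ (available here since $M$ is a level set of a submersion in $\mathbb{R}^n$, hence orientable, so the normal bundle of $\Gamma$ in $M$ is trivial, and $p>0$), plus the observation that regular values, independence conditions, and characteristic multipliers are all diffeomorphism-invariant. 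The paper's route is shorter but leans on an uncited-in-detail classical fact; yours trades that for a routine but nontrivial verification that the hypotheses of Theorem \ref{MT} descend to the restriction.
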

\begin{proof}
Recall from Theorem \eqref{MT} that the characteristic multipliers of the periodic orbit $\Gamma$ of the vector field $X$ are $$\underbrace{1,\dots,1}_{k+1 \ \mathrm{times}}, \exp\left(\int_{0}^{T}h_1 (\gamma(s))ds\right), \dots, \exp\left(\int_{0}^{T}h_p (\gamma(s))ds\right).$$

By a classical result concerning the properties of characteristic multipliers in the presence of first integrals (see e.g., \cite{moser}) we have that, if the common level set of the first integrals $I_1,\dots,I_k$, containing $\Gamma$, is a smooth manifold, then the characteristic multipliers of $\Gamma$ as a periodic orbit of the restriction of the vector field $X$ to this dynamically invariant manifold are the following: $1$ (due to the fact that $\Gamma$ is a periodic orbit also for the restriction of $X$), and respectively the rest of $n-k-1$ characteristic multipliers of $\Gamma$ as a periodic orbit of $X$. Recall that $\Gamma$ as a periodic orbit of $X$ has $k+1$ characteristic multipliers equal to one ($k$ of them associated to the first integrals $I_1,\dots,I_k$, and one due to the fact that $\Gamma$ is a periodic orbit), and respectively some other $n-k-1$ characteristic multipliers (possible some of them also being equal to one). 

Consequently, if $0\in \mathbb{R}^{k}$ is a regular value of the map $I:=(I_1,\dots,I_k)$, then the dynamical system $\dot x = X|_{I^{-1}(\{0\})}(x),$ given by the restriction of the vector field $X$ to the dynamically invariant manifold $I^{-1}(\{0\})$, admits $\Gamma$ as periodic orbit (by dynamical invariance), and the associated characteristic multipliers are $$1, \exp\left(\int_{0}^{T}h_1 (\gamma(s))ds\right),\dots,\exp\left(\int_{0}^{T}h_p (\gamma(s))ds\right).$$

Hence, one can apply the Theorem \eqref{AW} for the dynamical system generated by the vector field $X|_{I^{-1}(\{0\})}$ and respectively for the periodic orbit $\Gamma$, and conclude the corresponding stability/instability results. Consequently, by dynamical invariance, the same conclusions hold true also for the periodic orbit $\Gamma$ associated to the original vector field $X$ with respect to perturbations along the invariant manifold $I^{-1}(\{0\})$. 

More precisely, if $\int_{0}^{T}h_1 (\gamma(s))ds<0$, $\dots$, $\int_{0}^{T}h_p (\gamma(s))ds<0$, then the characteristic multipliers of the periodic orbit $\Gamma$ of the vector field $X|_{I^{-1}(\{0\})}$ have the following properties: the characteristic multiplier one is simple (its multiplicity is one), and the rest of characteristic multipliers have modulus strictly less then one, and hence the periodic orbit is orbitally phase asymptotically stable. Hence, because of dynamical invariance, the same conclusion holds in the case of the vector field $X$ with respect to perturbations along the invariant manifold $I^{-1}(\{0\})$.

On the other hand (even if $0\in \mathbb{R}^{k}$ it is not a regular value of the map $I:=(I_1,\dots,I_k)$), if there exists $i_0 \in\{1,\dots, p\}$ such that $\int_{0}^{T}h_{i_0} (\gamma(s))ds>0$, we obtain directly from Theorem \eqref{AW} that the periodic orbit $\Gamma$ of the vector field $X$, it is unstable.
\end{proof}

\bigskip
Let us illustrate now the results of the above theorem in the case of a three dimensional dissipative perturbation of the harmonic oscillator. 

\begin{example}\label{DHO}
Let $\dot{\mathbf{x}}=X(\mathbf{x})$, $\mathbf{x}=(x,y,z)\in\mathbb{R}^{3}$, be the dynamical system generated by the smooth vector field  $$X(x,y,z)=y\partial_{x}-x\partial_{y}+zh(x,y,z)\partial_{z},$$ 
where $h\in\mathcal{C}^{\infty}(\mathbb{R}^{3},\mathbb{R})$ is an arbitrary given smooth real function. 

Then, the set $\Gamma=\{\gamma(t)=(\sin t,\cos t,0): 0\leq t\leq 2\pi\}$ is a $2\pi-$periodic orbit of the dynamical system $\dot{\mathbf{x}}=X(\mathbf{x})$. 

Note that the above defined dynamical system is a codimension-one dissipative system, associated with the following data
\begin{itemize}
\item $I:\mathbb{R}^{3}\rightarrow \mathbb{R}$, $I(x,y,z)=x^2 +y^2 -1$,
\item $D:\mathbb{R}^{3}\rightarrow \mathbb{R}$, $D(x,y,z)=z$,
\item $h:\mathbb{R}^{3}\rightarrow \mathbb{R}$, 
\end{itemize}
since, $\mathcal{L}_{X}I=0$ and $\mathcal{L}_{X}D=hD$.

The hypothesis of the Theorem \eqref{SPO} are verified since
\begin{itemize}
\item $\Gamma \subset ID^{-1}(\{(0,0)\})$,
\item $(0,0)$ is a regular value of the map $ID:\mathbb{R}^{3}\rightarrow \mathbb{R}^{2}$, $ID(x,y,z)=(x^2 +y^2 -1,z)$,
\item $\nabla I(\sin t,\cos t,0),\nabla D(\sin t,\cos t,0),X(\sin t,\cos t,0)$ are linearly independent \\
for each $t\in[0,2\pi]$,
\item $0$ is a regular value of the map $I$.
\end{itemize}

Hence, by Theorem \eqref{SPO} we obtain the following conclusions:
\begin{itemize}
\item if $\int_{0}^{2\pi}h(\sin t,\cos t,0)dt<0$, then the periodic orbit $\Gamma$ is orbitally phase asymptotically stable, with respect to perturbations along the cylinder $I^{-1}(\{0\})$,
\item if $\int_{0}^{2\pi}h(\sin t,\cos t,0)dt>0$, then the periodic orbit $\Gamma$ is unstable.
\end{itemize}
\end{example}

\section{Orbitally asymptotically stabilizing the periodic orbits of completely integrable dynamical systems}

The purpose of this section is to apply the results from the previous section in order to partially orbitally asymptotically stabilize, a given periodic orbit of a completely integrable dynamical system. In order to do that, let us consider a completely integrable dynamical system $\dot x=X(x)$, $X\in\mathfrak{X}(U)$, defined eventually on an open subset $U\subseteq \mathbb{R}^{n}$ (i.e., it admits a set of $n-1$ first integrals, $I_1,\dots,I_k, D_1,\dots, D_p\in\mathcal{C}^{\infty}(U,\mathbb{R})$, independent at least on an open subset $V \subseteq U$). Suppose that $\Gamma=\{\gamma(t)\subset V : 0\leq t\leq T \}$ is a $T-$periodic orbit of the system $\dot x=X(x)$. The idea for the stabilization procedure is to perturb the completely integrable system $\dot x=X(x)$, in such a way that the perturbed dynamical system becomes a dissipative dynamical system on $V$, which admits also $\Gamma$ as a periodic orbit, and moreover verifies the hypothesis of Theorem \eqref{SPO}. Note that using classical perturbation methods, the persistence of periodic orbits after perturbations, follows as a consequence of the implicit function theorem. The method introduced in this section, provide for the class of completely integrable dynamical system, an explicit perturbation which preserve (under reasonable conditions) an a-priori given periodic orbit.

\begin{theorem}\label{OST}
Let $\dot x= X(x)$ be a completely integrable dynamical system generated by a smooth vector field $X\in\mathfrak{X}(U)$ defined eventually on an open subset $U\subseteq \mathbb{R}^{n}$, and let $k,p\in\mathbb{N}$ be two natural numbers, with $k+p=n-1$, such that there exist $n-1$ first integrals $I_1,\dots,I_k, D_1,\dots, D_p\in\mathcal{C}^{\infty}(U,\mathbb{R})$, independent on an open subset $V \subseteq U$. Suppose the system $\dot x= X(x)$ admits a $T-$periodic orbit $\Gamma=\{\gamma(t)\subset V : 0\leq t\leq T \}$ such that:
\begin{itemize}
\item $\Gamma\subset ID^{-1}(\{0\})$, and $0 \in \mathbb{R}^{n-1}$ is a regular value of the map $$ID=(I_1,\dots,I_k,D_1,\dots,D_p):V\subseteq \mathbb{R}^{n}\rightarrow \mathbb{R}^{n-1},$$
\item $\nabla I_1(\gamma(t)),\dots,\nabla I_k(\gamma(t)),\nabla D_1(\gamma(t)),\dots, \nabla D_p (\gamma(t)), X(\gamma(t))$ are linearly\\
 independent for each $0\leq t \leq T$.
\end{itemize}

If moreover, $0 \in \mathbb{R}^{k}$ is a regular value of the map $I=(I_1,\dots,I_k):V \subseteq \mathbb{R}^{n}\rightarrow \mathbb{R}^{k},$ then for any choice of smooth functions $h_1,\dots, h_p \in\mathcal{C}^{\infty}(V,\mathbb{R})$ such that
$$\int_{0}^{T}h_1 (\gamma(s))ds<0, \dots, \int_{0}^{T}h_p (\gamma(s))ds<0,$$
$\Gamma$, as a periodic orbit of the dissipative dynamical system $\dot x =X(x)+X_{0}(x)$, $x\in V$,
$$
X_{0}=\left\| \bigwedge_{i=1}^{p} \nabla D_i\wedge\bigwedge_{j=1}^{k} \nabla I_j \right\|_{n-1}^{-2}\cdot\sum_{i=1}^{p}(-1)^{n-i}h_i D_i \Theta_i, 
$$
$$
\Theta_i = \star\left[ \bigwedge_{j=1, j\neq i}^{p} \nabla D_j \wedge \bigwedge_{l=1}^{k} \nabla I_l  \wedge\star\left(\bigwedge_{j=1}^{p} \nabla D_j\wedge\bigwedge_{l=1}^{k} \nabla I_l \right)\right],
$$
is orbitally phase asymptotically stable, with respect to perturbations along the invariant manifold $I^{-1}(\{0\})$.

On the other hand, for any choice of smooth functions
$k_1,\dots, k_p \in\mathcal{C}^{\infty}(V,\mathbb{R}),$ such that there exists $i_0 \in \{1,\dots,p\}$ for which $$\int_{0}^{T}k_{i_0} (\gamma(s))ds>0,$$
$\Gamma$, as a periodic orbit of the dissipative dynamical system $\dot x =X(x)+X_{0}(x)$, $x\in V$,
$$
X_{0}=\left\| \bigwedge_{i=1}^{p} \nabla D_i\wedge\bigwedge_{j=1}^{k} \nabla I_j \right\|_{n-1}^{-2}\cdot\sum_{i=1}^{p}(-1)^{n-i}k_i D_i \Theta_i, 
$$
$$
\Theta_i = \star\left[ \bigwedge_{j=1, j\neq i}^{p} \nabla D_j \wedge \bigwedge_{l=1}^{k} \nabla I_l  \wedge\star\left(\bigwedge_{j=1}^{p} \nabla D_j\wedge\bigwedge_{l=1}^{k} \nabla I_l \right)\right],
$$
is an unstable periodic orbit.

\end{theorem}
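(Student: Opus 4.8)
The plan is to recognize the perturbed field $\tilde X := X + X_0$ as a codimension-one dissipative system to which Theorem \eqref{SPO} applies directly, so that both conclusions follow at once. First I would record the dissipative structure of $\tilde X$. Since $\dot x = X(x)$ is completely integrable with first integrals $I_1,\dots,I_k,D_1,\dots,D_p$, the field $X$ conserves all of them, i.e. $\mathcal{L}_X I_j = 0$ and $\mathcal{L}_X D_i = 0$. On the other hand, $X_0$ is built by the very formula of Theorem \eqref{DSC}, so by Remark \eqref{ret} it is itself a solution of the system \eqref{edr}, namely $\mathcal{L}_{X_0} I_j = 0$ and $\mathcal{L}_{X_0} D_i = h_i D_i$. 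Adding these and using linearity of the Lie derivative gives $\mathcal{L}_{\tilde X} I_j = 0$ and $\mathcal{L}_{\tilde X} D_i = h_i D_i$ for every $j,i$; hence $\tilde X$ is codimension-one dissipative with the same conserved quantities $I_1,\dots,I_k$, dissipated quantities $D_1,\dots,D_p$, and associated dissipation rates $h_1 D_1,\dots,h_p D_p$.

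The decisive step — and the one I expect to carry the whole argument — is to show that the perturbation does not destroy $\Gamma$. For this I would observe that every summand of $X_0$ carries an explicit factor $D_i$. Since $\Gamma \subset ID^{-1}(\{0\})$, we have $D_i(\gamma(t)) = 0$ for all $i$ and all $t \in [0,T]$, whence $X_0(\gamma(t)) = 0$ along $\Gamma$. Consequently $\tilde X(\gamma(t)) = X(\gamma(t))$ for every $t$, so the original parameterization $\gamma$ still solves the perturbed system: $\dot\gamma(t) = X(\gamma(t)) = \tilde X(\gamma(t))$. Thus $\Gamma$ remains a $T$-periodic orbit of $\dot x = \tilde X(x)$, with the very same parameterization — this is the explicit persistence that replaces the implicit-function-theorem argument alluded to before the statement.

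It then remains to transfer the two standing hypotheses from $X$ to $\tilde X$, which is immediate because the two fields agree on $\Gamma$. The inclusion $\Gamma \subset ID^{-1}(\{0\})$ and the regularity of $0$ for $ID$ (and for $I$) are conditions on the functions $I_j, D_i$ alone, hence unchanged; and since $\tilde X(\gamma(t)) = X(\gamma(t))$, the list $\nabla I_1(\gamma(t)),\dots,\nabla D_p(\gamma(t)),\tilde X(\gamma(t))$ is linearly independent for each $t$ precisely because the corresponding list for $X$ is. Therefore $\tilde X$ together with $\Gamma$ satisfies all the hypotheses of Theorem \eqref{SPO}.

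Finally I would invoke Theorem \eqref{SPO} twice. For the first assertion, with the rates $h_i$ satisfying $\int_{0}^{T}h_i(\gamma(s))\,ds < 0$ for all $i$ and $0$ a regular value of $I$, the stability part of Theorem \eqref{SPO} yields that $\Gamma$ is orbitally phase asymptotically stable with respect to perturbations along $I^{-1}(\{0\})$. For the second assertion, replacing the rates by the $k_i$ with $\int_{0}^{T}k_{i_0}(\gamma(s))\,ds > 0$ for some $i_0$, the same construction produces a dissipative field $\tilde X = X + X_0$ admitting $\Gamma$ as a periodic orbit by the identical vanishing argument, and the instability part of Theorem \eqref{SPO} gives that $\Gamma$ is unstable. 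I would note in passing that $p > 0$ is forced by the hypotheses, since the integral conditions and the index $i_0$ presuppose at least one dissipated quantity, matching the standing assumption $p > 0$ in Theorem \eqref{SPO}.
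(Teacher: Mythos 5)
Your proposal is correct and follows essentially the same route as the paper: establish the dissipative structure of $X+X_{0}$ via Remark \eqref{ret} and linearity of the Lie derivative, observe that the factors $D_i$ force $X_{0}$ to vanish along $\Gamma$ so the orbit persists with the same parameterization, transfer the independence hypothesis since the two fields agree on $\Gamma$, and conclude by Theorem \eqref{SPO}. No gaps.
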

\begin{proof}
Let us show first that the perturbed system $\dot x = X(x)+ X_0 (x)$, is a codimension-one dissipative dynamical system. In order to do that, will be enough to prove that the vector field $X+X_0$ conserves $I_1,\dots,I_k$ and dissipates $D_1,\dots, D_p$. For a unified notation, let us denote for each $j\in\{1,\dots,p\}$, $u_j = h_j$, for the fist hypothesis of Theorem \eqref{OST}, and respectively $u_j = k_j$, for the second hypothesis of Theorem \eqref{OST}. 

Recall from Remark \eqref{ret} that:
\begin{equation*}
\left\{\begin{array}{l}
\mathcal{L}_{X_0}I_1= \dots = \mathcal{L}_{X_0}I_k = 0,\\
\mathcal{L}_{X_0}D_1 = u_1 D_1, \dots, \mathcal{L}_{X_0}D_p = u_p D_p.\\
\end{array}\right.
\end{equation*}

Hence, for each $i\in\{1,\dots,k\}$ and respectively $j\in\{1,\dots,p\}$, we obtain
\begin{equation*}
\left\{\begin{array}{l}
\mathcal{L}_{X+X_0}I_i= \mathcal{L}_{X}I_i + \mathcal{L}_{X_0}I_i=0+0=0,\\
\mathcal{L}_{X+X_0}D_j = \mathcal{L}_{X}D_j+\mathcal{L}_{X_0}D_j= 0+ u_j D_j = u_j D_j,\\
\end{array}\right.
\end{equation*}
and consequently, $\dot x = X(x)+X_{0}(x)$, $x\in V$, is a codimension-one dissipative dynamical system.

Recall that $\Gamma=\{\gamma(t)\subset V : 0\leq t\leq T \}$ is a periodic orbit of the dynamical system $\dot x = X(x)+X_{0}(x)$ too, since the hypothesis $\Gamma\subset ID^{-1}(\{0\})$ implies that $D_i \circ \gamma =0$, for every $i\in\{1,\dots, p\}$, and consequently $X_{0} (\gamma(t))=0$, for every $t\in[0,T]$.

Note that since $(X+X_0)(\gamma(t))=X(\gamma(t))$, for every $t\in[0,T]$, the condition that $\nabla I_1(\gamma(t)),\dots,\nabla I_k(\gamma(t)),\nabla D_1(\gamma(t)),\dots, \nabla D_p (\gamma(t)), X(\gamma(t))$ are linearly independent for each $0\leq t \leq T$, is obviously equivalent with the condition that $$\nabla I_1(\gamma(t)),\dots,\nabla I_k(\gamma(t)),\nabla D_1(\gamma(t)),\dots, \nabla D_p (\gamma(t)), (X+X_0)(\gamma(t))$$ are linearly independent for each $0\leq t \leq T$.

Now the conclusions of Theorem \eqref{OST} follow by applying the Theorem \eqref{SPO} for the codimension-one dissipative dynamical system $\dot x = X(x)+ X_0 (x)$, $x\in V$, and respectively the $T-$periodic orbit $\Gamma=\{\gamma(t)\subset V : 0\leq t\leq T \}$. 
\end{proof}

\begin{remark}\label{remimp}
In the hypothesis of the Theorem \eqref{OST}, note that:
\begin{itemize}
\item a consequence of the Remark \eqref{CIS} is that the set of points $x\in U$ such that $$\left\| \bigwedge_{i=1}^{p} \nabla D_i (x)\wedge\bigwedge_{j=1}^{k} \nabla I_j (x)\right\|_{n-1}=0,$$ is a subset of the equilibrium points of the completely integrable vector field $X$.

\item the condition $\Gamma\subset ID^{-1}(\{0\})$ implies that \textbf{for any choice} of smooth functions $h_1,\dots, h_p \in\mathcal{C}^{\infty}(V,\mathbb{R})$, the control vector field $X_0 \in \mathfrak{X}(V)$, given by 
$$
X_{0}=\left\| \bigwedge_{i=1}^{p} \nabla D_i\wedge\bigwedge_{j=1}^{k} \nabla I_j \right\|_{n-1}^{-2}\cdot\sum_{i=1}^{p}(-1)^{n-i}h_i D_i \Theta_i, 
$$
$$
\Theta_i = \star\left[ \bigwedge_{j=1, j\neq i}^{p} \nabla D_j \wedge \bigwedge_{l=1}^{k} \nabla I_l  \wedge\star\left(\bigwedge_{j=1}^{p} \nabla D_j\wedge\bigwedge_{l=1}^{k} \nabla I_l \right)\right],
$$
verifies that $X_{0} (\gamma(t))=0$, for every $t\in[0,T]$;

\item each of the smooth functions $h_1,\dots, h_p \in\mathcal{C}^{\infty}(V,\mathbb{R})$ might be chosen of the type e.g., $h(x)=-(\psi^2 (x)+ c)$, $x\in V$, with $\psi\in\mathcal{C}^{\infty}(V,\mathbb{R})$ and $c\in (0,\infty)$, since $$\int_{0}^{T}h (\gamma(s))ds=-\int_{0}^{T}\psi^2 (\gamma(s))ds -Tc \leq -Tc<0;$$

\item the smooth function $k_{i_0} \in\mathcal{C}^{\infty}(V,\mathbb{R})$ might be chosen of type e.g., $k(x)=\phi^2 (x)+ c$, $x\in V$, with $\phi\in\mathcal{C}^{\infty}(V,\mathbb{R})$ and $c\in (0,\infty)$, since $$\int_{0}^{T}k (\gamma(s))ds=\int_{0}^{T}\phi^2 (\gamma(s))ds +Tc \geq  Tc>0.$$
\end{itemize}
\end{remark}
\bigskip

Let us now illustrate the above stabilization result in the case of the harmonic oscillator. We consider this simple example in order to point out that different choices of $I's$ and $D's$ may generate different domains of definition for the perturbed vector field.

\begin{example}
Let us consider the family of harmonic oscillators, described by the three dimensional vector field $$X(x,y,z)=y \partial_{x}-x \partial_{y} \in\mathfrak{X}(\mathbb{R}^3).$$

The induced dynamical system, \begin{equation}\label{HO}\dot{\mathbf{x}}=X(\mathbf{x}), \ \mathbf{x}=(x,y,z)\in\mathbb{R}^{3},\end{equation} admits a $2\pi-$periodic orbit given by $\Gamma=\{\gamma(t)=(\sin t,\cos t,0): 0\leq t\leq 2\pi\}$.

Moreover, the system \eqref{HO} is completely integrable, since it has two independent first integrals, namely
$$
I_1 (x,y,z)=x^2 +y^2 -1, \ I_2 (x,y,z)=z.
$$

In order to apply the Theorem \eqref{OST}, the candidates for the functions $I$ and $D$ are the first integrals $I_1$ and respectively $I_2$. Consequently, we have two cases, namely $I=I_1$ and $D=I_2$, and respectively $I=I_2$ and $D=I_1$.

\medskip
$\diamond$ \textbf{Let us now analyze the first case, namely $I=I_1$ and $D=I_2$.}
\medskip

By straightforward computations we obtain that the vector field $X_0$ from Theorem \eqref{OST}, in this case has the expression $$X_0 (x,y,z)=z u(x,y,z)\partial_{z}, \ (x,y,z)\in\mathbb{R}^{3},$$ and consequently it verifies the condition $X_0 \circ\gamma=0$, for any smooth real function $u\in\mathcal{C}^{\infty}(\mathbb{R}^{3},\mathbb{R})$.

Consequently, the perturbed system 
\begin{equation}\label{PSTR}
\dot{\mathbf{x}}=X(\mathbf{x})+X_{0}(\mathbf{x}), \ \mathbf{x}=(x,y,z)\in\mathbb{R}^{3},
\end{equation}
is a codimension-one dissipative dynamical system associated to $I,D,u\in\mathcal{C}^{\infty}(\mathbb{R}^{3},\mathbb{R})$, i.e., $\mathcal{L}_{X+X_{0}}I=0$, and respectively $\mathcal{L}_{X+X_{0}}D=uD$.

Since $X_0 \circ\gamma=0$, for any smooth real function $u\in\mathcal{C}^{\infty}(\mathbb{R}^{3},\mathbb{R})$, we obtain that $\Gamma$ is a periodic orbit of the dissipative system $\dot{\mathbf{x}}=X(\mathbf{x})+X_{0}(\mathbf{x})$, for any smooth real function $u\in\mathcal{C}^{\infty}(\mathbb{R}^{3},\mathbb{R})$. 

Moreover, the rest of hypothesis of Theorem \eqref{OST} are similar to those of Theorem \eqref{SPO}, and were already been verified in Example \eqref{DHO}
for the vector field $$X(x,y,z)+X_0 (x,y,z)= y \partial_{x}-x \partial_{y}+ zu(x,y,z)\partial_{z}.$$

Hence, by Theorem \eqref{OST}, we obtain the following conclusions:
\begin{itemize}
\item for any smooth function $u\in \mathcal{C}^{\infty}(\mathbb{R}^{3},\mathbb{R})$ such that $\int_{0}^{2\pi}u(\sin t,\cos t,0)dt<0$, the periodic orbit $\Gamma$ of the associated perturbed system \eqref{PSTR} is orbitally phase asymptotically stable, with respect to perturbations along the cylinder $I^{-1}(\{0\})$;
\item for any smooth function $u\in \mathcal{C}^{\infty}(\mathbb{R}^{3},\mathbb{R})$ such that $\int_{0}^{2\pi}u(\sin t,\cos t,0)dt>0$, the periodic orbit $\Gamma$ of the associated perturbed system \eqref{PSTR} is unstable.
\end{itemize}

$\diamond$ $\diamond$ \textbf{Let us now analyze the second case, namely $I=I_2$ and $D=I_1$.} 
\medskip

By straightforward computations we obtain that the vector field $X_0$ from Theorem \eqref{OST}, in this case has the expression 
$$
X_0 (x,y,z)=\dfrac{u(x,y,z)(x^2 +y^2 -1)}{2(x^2 +y^2)}\left(x\partial_{x}+y\partial_{y}\right), \ (x,y,z)\in V:=\mathbb{R}^{3}\setminus\{(0,0,z): z\in\mathbb{R}\},
$$ 
and consequently it verifies the condition $X_0 \circ\gamma=0$, for any smooth real function $u\in\mathcal{C}^{\infty}(V,\mathbb{R})$.

Consequently, the perturbed system 
\begin{equation}\label{PSTR1}
\dot{\mathbf{x}}=X(\mathbf{x})+X_{0}(\mathbf{x}), \ \mathbf{x}=(x,y,z)\in V,
\end{equation}
is a codimension-one dissipative dynamical system associated to $I,D,u$, i.e., $\mathcal{L}_{X+X_{0}}I=0$, and respectively $\mathcal{L}_{X+X_{0}}D=uD$.

Since $X_0 \circ\gamma=0$, for any smooth real function $u\in\mathcal{C}^{\infty}(V,\mathbb{R})$, we obtain that $\Gamma$ is a periodic orbit of the dissipative system $\dot{\mathbf{x}}=X(\mathbf{x})+X_{0}(\mathbf{x})$, for any smooth real function $u\in\mathcal{C}^{\infty}(V,\mathbb{R})$. 

Moreover, the rest of hypothesis of Theorem \eqref{OST} are obviously verified, since they are similar with those from the previous case. Note that in this case, the perturbed vector field is given by 
\begin{align*}
X(x,y,z)+X_0 (x,y,z)&= \left[ \dfrac{u(x,y,z)x(x^2 +y^2 -1)}{2(x^2 +y^2)}+y\right]\partial_{x}\\
&+\left[ \dfrac{u(x,y,z)y(x^2 +y^2 -1)}{2(x^2 +y^2)}-x \right]\partial_{y}.
\end{align*}

Hence, by Theorem \eqref{OST}, we obtain the following conclusions:
\begin{itemize}
\item for any smooth function $u\in \mathcal{C}^{\infty}(V,\mathbb{R})$ such that $\int_{0}^{2\pi}u(\sin t,\cos t,0)dt<0$, the periodic orbit $\Gamma$ of the associated perturbed system \eqref{PSTR1} is orbitally phase asymptotically stable, with respect to perturbations along the plane $I^{-1}(\{0\})$;
\item for any smooth function $u\in \mathcal{C}^{\infty}(V,\mathbb{R})$ such that $\int_{0}^{2\pi}u(\sin t,\cos t,0)dt>0$, the periodic orbit $\Gamma$ of the associated perturbed system \eqref{PSTR1} is unstable.
\end{itemize}
\end{example}

Let us now illustrate the main result in the case of a mechanical dynamical system, namely Euler's equations from the free rigid body dynamics. 

\begin{example}
Let us recall that Euler's equations in terms of the rigid body angular momenta are generated by the vector field
$$
X(x,y,z)=\left( \dfrac{1}{I_3}-\dfrac{1}{I_2}\right)yz \partial_{x}+ \left( \dfrac{1}{I_1}-\dfrac{1}{I_3}\right)zx\partial_{y}+ 
\left( \dfrac{1}{I_2}-\dfrac{1}{I_1}\right)xy\partial_{z}\in\mathfrak{X}(\mathbb{R}^3),
$$
where $I_1, I_2, I_3$ are the moments of inertia in the principal axis frame of the rigid body. We will suppose in the following that $I_1 > I_2 > I_3 > 0$.

The induced dynamical system, \begin{equation}\label{HOO}\dot{\mathbf{x}}=X(\mathbf{x}), \ \mathbf{x}=(x,y,z)\in\mathbb{R}^{3},\end{equation} is completely integrable, since it has two independent first integrals, namely
$$
F_1 (x,y,z)=\dfrac{1}{2}\left(\dfrac{x^2}{I_1} +\dfrac{y^2}{I_2} +\dfrac{z^2}{I_3} \right), \ F_2 (x,y,z)=\dfrac{1}{2}\left(x^2 + y^2 + z^2 \right).
$$

Recall that there exists an open and dense subset of the image of the map $(F_1,F_2):\mathbb{R}^3 \rightarrow \mathbb{R}^2$, such that each fiber of any element $(h,c)$ from this set, corresponds to periodic orbits of Euler's equations. Moreover, any such element is a regular value of $(F_1,F_2)$, as well as its components for the corresponding maps, $F_1$ and respectively $F_2$. For more details see, e.g., \cite{dgt}.

Let $(h,c)\in \mathbb{R}^2$ belongs to the above mention set, and let us denote $$J_1 (x,y,z):=F_1(x,y,z)-h, \quad J_2 (x,y,z):= F_2 (x,y,z)- c.$$ 

In the above conditions, the set $$\Gamma=\left\{\gamma(t)=(\gamma_1 (t),\gamma_2 (t),\gamma_3 (t)): 0\leq t\leq \dfrac{4K\sqrt{I_1 I_2 I_3}}{\sqrt{2(I_2 - I_3)(h I_1 - c)}}\right\},$$ where 

\begin{align*}
\gamma_1 (t)&=\sqrt{\dfrac{2I_1 (c-h I_3)}{I_1 - I_3}} \cdot \operatorname{cn}\left(\sqrt{\dfrac{2(I_2 - I_3)(h I_1 - c)}{I_1 I_2 I_3}}\cdot t;k\right),\\
\gamma_2 (t)&=\sqrt{\dfrac{2I_2 (c-h I_3)}{I_2 - I_3}} \cdot \operatorname{sn}\left(\sqrt{\dfrac{2(I_2 - I_3)(h I_1 - c)}{I_1 I_2 I_3}}\cdot t;k\right),\\
\gamma_3 (t)&=-\sqrt{\dfrac{2I_3 (-c+h I_1)}{I_1 - I_3}} \cdot \operatorname{dn}\left(\sqrt{\dfrac{2(I_2 - I_3)(h I_1 - c)}{I_1 I_2 I_3}}\cdot t;k\right),\\
k&=\sqrt{\dfrac{(h I_3 - c)(I_1 - I_2)}{(h I_1 - c)(I_3 - I_2)}},\\
K&=\int_{0}^{1}\dfrac{dt}{\sqrt{(1-t^2)(1-k^2 t^2)}},
\end{align*}
is a $\dfrac{4K\sqrt{I_1 I_2 I_3}}{\sqrt{2(I_2 - I_3)(h I_1 - c)}}-$periodic orbit of the dynamical system \eqref{HOO}, which belongs to the common zero level set of the first integrals $J_1$ and $J_2$. See for details, e.g., \cite{dgt}. Recall that the above parameterization of $\Gamma$ is given in terms of Jacobi elliptic functions.

By straightforward computations we get that $\nabla J_1 (\mathbf{x}), \nabla J_2 (\mathbf{x}), X (\mathbf{x})$, are linearly dependent vectors if and only if $\mathbf{x}$ is an equilibrium point of the dynamical system \eqref{HOO}. Hence, the vectors $\nabla J_1 (\gamma(t)), \nabla J_2 (\gamma(t)), X (\gamma(t))$, are linearly independent for each $t\in \left[0, \dfrac{4K\sqrt{I_1 I_2 I_3}}{\sqrt{2(I_2 - I_3)(h I_1 - c)}}\right].$

In order to apply the Theorem \eqref{OST}, the candidates for the functions $I$ and $D$ are the first integrals $J_1$ and respectively $J_2$. Consequently, we have two cases, namely $I=J_1$ and $D=J_2$, and respectively $I=J_2$ and $D=J_1$.

\medskip
$\diamond$ \textbf{Let us now analyze the first case, namely $I=J_1$ and $D=J_2$.}
\medskip

By straightforward computations we obtain that the vector field $X_0$ from Theorem \eqref{OST}, in this case has the expression 

\begin{align*}
X_0 (x,y,z)&=\dfrac{u(x,y,z)\left[\dfrac{1}{2}(x^2 +y^2 +z^2)-c \right]}{\left[xy\left(\dfrac{1}{I_1}-\dfrac{1}{I_2}\right)\right]^2 +\left[yz\left(\dfrac{1}{I_2}-\dfrac{1}{I_3}\right)\right]^2+\left[xz\left(\dfrac{1}{I_1}-\dfrac{1}{I_3}\right)\right]^2}\\
&\cdot \{ x\left[ \dfrac{1}{I_2}\left(\dfrac{1}{I_2}-\dfrac{1}{I_1}\right) y^2 + \dfrac{1}{I_3}\left(\dfrac{1}{I_3}-\dfrac{1}{I_1}\right) z^2\right]\partial_{x} \\
&+ y\left[ \dfrac{1}{I_1}\left(\dfrac{1}{I_1}-\dfrac{1}{I_2}\right) x^2 + \dfrac{1}{I_3}\left(\dfrac{1}{I_3}-\dfrac{1}{I_2}\right) z^2\right]\partial_{y}\\
&+ z\left[ \dfrac{1}{I_1}\left(\dfrac{1}{I_1}-\dfrac{1}{I_3}\right) x^2 + \dfrac{1}{I_2}\left(\dfrac{1}{I_2}-\dfrac{1}{I_3}\right) y^2\right]\partial_{z} \}
, \ (x,y,z)\in V,
\end{align*}
where $V:=\mathbb{R}^3 \setminus\{\{(x,0,0): x\in\mathbb{R}\}\cup \{(0,y,0): y\in\mathbb{R}\} \cup \{(0,0,z): z\in\mathbb{R}\}\}$ (note that $V$ is exactly the complement of the set of equilibrium states of Euler's equations). 

Recall that the vector field $X_0$ verifies the condition $X_0 \circ\gamma=0$, for any smooth real function $u\in\mathcal{C}^{\infty}(V,\mathbb{R})$, since $\Gamma$ belongs to the zero level set of the first integral $J_2$.

Consequently, the perturbed system 
\begin{equation}\label{PSTRA}
\dot{\mathbf{x}}=X(\mathbf{x})+X_{0}(\mathbf{x}), \ \mathbf{x}=(x,y,z)\in\mathbb{R}^{3},
\end{equation}
is a codimension-one dissipative dynamical system associated to $I,D,u$, i.e., $\mathcal{L}_{X+X_{0}}I=0$, and respectively $\mathcal{L}_{X+X_{0}}D=uD$.

Since $X_0 \circ\gamma=0$, for any smooth real function $u\in\mathcal{C}^{\infty}(V,\mathbb{R})$, we obtain that $\Gamma$ is a periodic orbit of the dissipative system $\dot{\mathbf{x}}=X(\mathbf{x})+X_{0}(\mathbf{x})$, for any smooth real function $u\in\mathcal{C}^{\infty}(V,\mathbb{R})$. 

Since all the hypothesis of Theorem \eqref{OST} are verified, we obtain the following conclusions. If one denotes $T:=\dfrac{4K\sqrt{I_1 I_2 I_3}}{\sqrt{2(I_2 - I_3)(h I_1 - c)}}$, then
\begin{itemize}
\item for any smooth function $u\in \mathcal{C}^{\infty}(V,\mathbb{R})$ such that $\int_{0}^{T}u(\gamma_1 (t),\gamma_2 (t),\gamma_3 (t))dt<0$, the periodic orbit $\Gamma$ of the associated perturbed system \eqref{PSTRA} is orbitally phase asymptotically stable, with respect to perturbations along the ellipsoid 
$$I^{-1}(\{0\})=\left\{(x,y,z): \ \dfrac{1}{2}\left(\dfrac{x^2}{I_1} +\dfrac{y^2}{I_2} +\dfrac{z^2}{I_3} \right)-h=0 \right\};$$
\item for any smooth function $u\in \mathcal{C}^{\infty}(V,\mathbb{R})$ such that $\int_{0}^{T}u(\gamma_1 (t),\gamma_2 (t),\gamma_3 (t))dt>0$, the periodic orbit $\Gamma$ of the associated perturbed system \eqref{PSTRA} is unstable.
\end{itemize}

$\diamond$ $\diamond$ \textbf{Let us now analyze the second case, namely $I=J_2$ and $D=J_1$.} 
\medskip

By straightforward computations we obtain that the vector field $X_0$ from Theorem \eqref{OST}, in this case has the expression 
\begin{align*}
X_0 (x,y,z)&=\dfrac{u(x,y,z)\left[\dfrac{1}{2}\left(\dfrac{x^2}{I_1} +\dfrac{y^2}{I_2} +\dfrac{z^2}{I_3}\right)-h \right]}{\left[xy\left(\dfrac{1}{I_1}-\dfrac{1}{I_2}\right)\right]^2 +\left[yz\left(\dfrac{1}{I_2}-\dfrac{1}{I_3}\right)\right]^2+\left[xz\left(\dfrac{1}{I_1}-\dfrac{1}{I_3}\right)\right]^2}\\
&\cdot \{ x\left[ \left(\dfrac{1}{I_1}-\dfrac{1}{I_2}\right) y^2 + \left(\dfrac{1}{I_1}-\dfrac{1}{I_3}\right) z^2\right]\partial_{x} \\
&+ y\left[ \left(\dfrac{1}{I_2}-\dfrac{1}{I_1}\right) x^2 + \left(\dfrac{1}{I_2}-\dfrac{1}{I_3}\right) z^2\right]\partial_{y}\\
&+ z\left[ \left(\dfrac{1}{I_3}-\dfrac{1}{I_1}\right) x^2 + \left(\dfrac{1}{I_3}-\dfrac{1}{I_2}\right) y^2\right]\partial_{z} \}
, \ (x,y,z)\in V,
\end{align*}
where $V:=\mathbb{R}^3 \setminus\{\{(x,0,0): x\in\mathbb{R}\}\cup \{(0,y,0): y\in\mathbb{R}\} \cup \{(0,0,z): z\in\mathbb{R}\}\}$ (note that $V$ is exactly the complement of the set of equilibrium states of Euler's equations). 

Recall that the vector field $X_0$ verifies the condition $X_0 \circ\gamma=0$, for any smooth real function $u\in\mathcal{C}^{\infty}(V,\mathbb{R})$, since $\Gamma$ belongs to the zero level set of the first integral $J_1$.

Consequently, the perturbed system 
\begin{equation}\label{PSTRB}
\dot{\mathbf{x}}=X(\mathbf{x})+X_{0}(\mathbf{x}), \ \mathbf{x}=(x,y,z)\in\mathbb{R}^{3},
\end{equation}
is a codimension-one dissipative dynamical system associated to $I,D,u$, i.e., $\mathcal{L}_{X+X_{0}}I=0$, and respectively $\mathcal{L}_{X+X_{0}}D=uD$.

Since $X_0 \circ\gamma=0$, for any smooth real function $u\in\mathcal{C}^{\infty}(V,\mathbb{R})$, we obtain that $\Gamma$ is a periodic orbit of the dissipative system $\dot{\mathbf{x}}=X(\mathbf{x})+X_{0}(\mathbf{x})$, for any smooth real function $u\in\mathcal{C}^{\infty}(V,\mathbb{R})$. 

Since all the hypothesis of Theorem \eqref{OST} are verified, we obtain the following conclusions. If one denotes $T:=\dfrac{4K\sqrt{I_1 I_2 I_3}}{\sqrt{2(I_2 - I_3)(h I_1 - c)}}$, then
\begin{itemize}
\item for any smooth function $u\in \mathcal{C}^{\infty}(V,\mathbb{R})$ such that $\int_{0}^{T}u(\gamma_1 (t),\gamma_2 (t),\gamma_3 (t))dt<0$, the periodic orbit $\Gamma$ of the associated perturbed system \eqref{PSTRB} is orbitally phase asymptotically stable, with respect to perturbations along the sphere 
$$I^{-1}(\{0\})=\left\{(x,y,z): \ \dfrac{1}{2}\left(x^2 + y^2 + z^2 \right)-c =0 \right\};$$
\item for any smooth function $u\in \mathcal{C}^{\infty}(V,\mathbb{R})$ such that $\int_{0}^{T}u(\gamma_1 (t),\gamma_2 (t),\gamma_3 (t))dt>0$, the periodic orbit $\Gamma$ of the associated perturbed system \eqref{PSTRB} is unstable.
\end{itemize}

\end{example}

\subsection*{Acknowledgment}
This work was supported by a grant of the Romanian National Authority for Scientific Research, CNCS-UEFISCDI, project number PN-II-RU-TE-2011-3-0103.

\bigskip
\bigskip

\noindent {\sc R.M. Tudoran}\\
West University of Timi\c soara\\
Faculty of Mathematics and Computer Science\\
Department of Mathematics\\
Blvd. Vasile P\^arvan, No. 4\\
300223 - Timi\c soara, Rom\^ania.\\
E-mail: {\sf tudoran@math.uvt.ro}\\
\medskip


\begin{thebibliography}{99}

\bibitem{rtud} {\footnotesize \textsc{P. Birtea, M. Bolean\c tu, M. Puta and R.M. Tudoran}, Asymptotic stability for a class of metriplectic systems, \textit{J. Math. Phys.} 48(8)(2007), 082703. }

\bibitem{bc} {\footnotesize \textsc{P. Birtea and D. Com\u anescu}, Asymptotic stability of dissipated Hamilton-Poisson systems, \textit{SIAM J. Appl. Dyn. Syst.} 8(3)(2009), 967--976. }

\bibitem{dgt} {\footnotesize \textsc{C. D\u anias\u a, A. G\^irban and R.M. Tudoran}, New aspects on the geometry and dynamics of quadratic Hamiltonian systems on $(\mathfrak{so}(3))^{*}$, \textit{Int. J. Geom. Methods Mod. Phys.}, 8(8)(2011), 1695--1721. }

\bibitem{gasul} {\footnotesize \textsc{A. Gasull, H. Giacomini and M. Grau}, On the stability of periodic orbits for differential systems in $\mathbb{R}^{n}$, \textit{Discrete and Continuous Dynamical Systems - Series B}, 10(2/3)(2008), 495--509. }

\bibitem{hartman} {\footnotesize \textsc{P. Hartman}, \textit{Ordinary Differential Equations}, Classics in Applied Mathematics, vol. 38, SIAM, 2002. }

\bibitem{moser} {\footnotesize \textsc{J. Moser and E. Zehnder}, \textit{Notes on Dynamical Systems}, Courant Lecture Notes in Mathematics, vol. 12, American Mathematical Society, 2005. }

\bibitem{ratiurazvan} {\footnotesize \textsc{T.S. Ratiu, R.M. Tudoran, L. Sbano, E. Sousa Dias and G. Terra}, \textit{Geometric Mechanics and Symmetry: the Peyresq Lectures; Chapter II: A Crash Course in Geometric Mechanics}, pp. 23--156, London Mathematical Society Lecture Notes Series, vol. 306, Cambridge University Press 2005. }

\bibitem{rtudoran} {\footnotesize \textsc{R.M. Tudoran}, On local generators of affine distributions on Riemannian manifolds, \textit{arXiv}: 1310.6512, (2013), 1--21. }

\bibitem{verhulst} {\footnotesize \textsc{F. Verhulst}, \textit{Nonlinear Differential Equations and Dynamical Systems}, second edition, Springer, 2006. }

\end{thebibliography}
\end{document}